      \theoremstyle{plain}
      \newtheorem*{thma}{Theorem}
      \newtheorem*{lemma}{Lemma}
      \theoremstyle{definition}
      \theoremstyle{remark}
      \def\@setcopyright{}
      \def\serieslogo@{}
\begin{document}

%



   \author{Mehdi Khorami and Mark Mahowald}




   \title{A filtration of unoriented cobordism}


   \begin{abstract}
     We give a filtration of the unoriented cobordism ring 
      using the infinite symplectic group, with polynomial generators given one at a time. The generating manifolds are also constructed using the cup construction. 
     \end{abstract}




   \date{\today}


   \maketitle



In this paper we give a homotopy filtration of the unoriented cobordism $MO$, with the polynomial generators constructed and given one at a time. Such filtration arises from the corresponding filtration given for $\text BO$. The filtration is indexed on triples of integers $(n, j, i)$ with $n\geq 1$ and $i, j\geq 0$ (with the exception that when $n=1$, $j\geq 1$). We order such triples in the following way: $(n, j, i)<(n', j', i')$ when 

\begin{tabular}{l r}
               $n < n'$, &  or\\

$n=n'$ and $j< j'$, & or  \\
$n=n'$, $j=j'$ and $i< i'$.&   \\

\end{tabular}

Our main theorem is the following. Mod 2 coefficients are assumed throughout.

\begin {thma} There exist a filtration $F_{(n,j,i)}$ of $\text{BO}$ such that if $MF_{(n,j,i)}$ is the Thom complex of the inclusion $F_{(n,j,i)}\to \text{BO}$, then $\pi_*(MF_{(n,j,i)})/\pi_*(MF_{<(n,j,i)})$ is a polynomial algebra $ \mathbb Z/2\mathbb Z[x]$ on one generator.
\end{thma}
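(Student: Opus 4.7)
The plan is to construct the filtration inductively by starting from the maps $\text{BSp}(n)\to \text{BO}$ coming from the forgetful passage from symplectic to real bundles, and then iterating the cup construction to pick up, at each triple $(n,j,i)$, exactly one new polynomial generator of $\pi_*(MO)$. Thom's theorem gives $\pi_*(MO)=\mathbb Z/2\mathbb Z[x_k:k\neq 2^m-1]$, so the target is a list of generators in bijection with the admissible triples, and the lexicographic order on triples should be arranged to match a chosen order on these generators by (total) degree.

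First I would fix the base of the induction: at level $n$, the space $F_{(n,0,0)}$ (for $n\geq 2$) is taken to be the image in $\text{BO}$ of $\text{BSp}(n)$, whose Thom spectrum contributes a symplectic generator of the lowest relevant degree at that level. The parameter $j$ then indexes iterations of the cup construction, and $i$ indexes the internal shifts within a fixed $(n,j)$-layer; when $n=1$ the symplectic input is trivial, which forces $j\geq 1$, as in the statement. Recall the cup construction: given $f\colon X\to \text{BO}$ with total Stiefel--Whitney class $w(f)$, one produces a new bundle whose total class is a prescribed polynomial in the classes of $f$, realizing a new indecomposable on the Thom spectrum level. Iterating along $(j,i)$ inside each $n$-level, and then along $n$, should yield a nested family $F_{(n,j,i)}\subset \text{BO}$ with $\bigcup F_{(n,j,i)}\simeq \text{BO}$.

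The computational heart is a mod $2$ cohomology argument. Using the Thom isomorphism, I would show that $H^*(MF_{(n,j,i)})$ is a direct sum of shifts of the Steenrod algebra $\mathcal A$, with the summands in bijection with triples $\leq (n,j,i)$, and that each successive inclusion adds exactly one new free $\mathcal A$-summand of the appropriate degree. The recognition principle for mod $2$ generalized Eilenberg--MacLane spectra (a bounded-below spectrum whose cohomology is free over $\mathcal A$ splits as a wedge of suspensions of $H\mathbb Z/2\mathbb Z$, as in the proof that $MO$ itself so splits) then forces each $MF_{(n,j,i)}$ to split accordingly, and the polynomiality of the homotopy quotient on a single generator $x$ follows from reading off Thom classes and multiplying.

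The main obstacle will be the bookkeeping that certifies a single indecomposable is introduced at each step: one must verify that the chosen cup-construction recipe produces exactly one new free $\mathcal A$-generator in $H^*(MF_{(n,j,i)})$ of the required degree, with no collateral contributions, and that the lexicographic enumeration of triples matches the ordered list of polynomial generators of $\pi_*(MO)$. A secondary technical point is ensuring that the cup-construction outputs embed coherently in $\text{BO}$ so that the $F_{(n,j,i)}$ form a genuine increasing filtration; this is where the interplay between the symplectic starting data and the Steenrod operations realized by the cup construction must be carefully tracked.
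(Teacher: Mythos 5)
Your proposal starts from the wrong maps into $\text{BO}$, and this is not a cosmetic difference. You take $F_{(n,0,0)}$ to be the image of the forgetful map $\text{BSp}(n)\to \text{BO}$; the paper instead uses the maps $\Omega^3 Sp(n)\to \text{BO}$ arising from Bott periodicity ($\Omega^4 \text{BSp}\simeq \text{BO}\times\mathbb Z$, hence $\Omega^3 Sp\simeq \text{BO}\times\mathbb Z$). The distinction matters because the Thom spectrum of the forgetful map is essentially $MSp(n)$, whose mod $2$ cohomology is \emph{not} free over $\mathcal A$ and whose homotopy is nothing like a polynomial algebra over $\mathbb Z/2\mathbb Z$; your proposed ``recognition principle for generalized Eilenberg--MacLane spectra'' step would simply fail there. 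The engine that makes the paper's argument work, and which is absent from your outline, is Mahowald's theorem that the Thom spectrum of the double-loop extension $\Omega^2 S^3\to \text{BO}$ of $\eta$ is $K(\mathbb Z/2\mathbb Z)$. Because the filtration pieces are images of H-space maps containing this $\Omega^2 S^3$ (equivalently the basepoint component of $\Omega^2 S^2\subset \Omega^3 Sp(1)$) as a factor, the homology of each Thom complex splits as $\mathcal A_*\otimes H_*(\text{remaining factor})$, the Adams spectral sequence collapses at $s=0$, and $\pi_*$ is read off as the homology of the remaining factor. Without identifying a source for the $\mathcal A_*$ tensor factor, your claim that each $H^*(MF_{(n,j,i)})$ is free over $\mathcal A$ with one new summand per step is unsupported, and for your choice of spaces it is false.

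A second, smaller misreading: the cup construction $P(m,X)$ of Brown is the quotient of $S^m\times X\times X$ by $(u,x,y)\sim(-u,y,x)$; it is a construction on \emph{manifolds}, used in the paper only at the end to exhibit explicit representatives of the generators (starting from $\mathbb RP^2$ and $\mathbb RP^{4(n-1)}$, applying cup $2$ to pass between $j$-families and cup $1$ within a family). It is not a bundle-theoretic operation prescribing Stiefel--Whitney classes, and it plays no role in defining the subspaces $F_{(n,j,i)}\subset \text{BO}$. Those are instead defined as images of $Sp(n-1)\times \Omega J_{2^i-1}(S^{(4n-2)2^j-1})$, where the James filtration $J_{2^i-1}$ of the loop-space decompositions $\Omega S^{4n-1}\doteq\prod_j S^{(4n-2)2^j}$ supplies the indices $j$ and $i$ and guarantees that exactly one new polynomial generator, in degree $((4n-2)2^j-1)2^i-1$, appears at each step. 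Your bookkeeping worry at the end is real, but it is resolved by this explicit cell-by-cell decomposition rather than by any property of the cup construction.
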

  Recall \cite{stong} that the unoriented cobordism ring $\pi_*(\text{MO})=\mathbb Z/2\mathbb Z[x_2, x_4, x_5, \cdots] $ is a polynomial algebra with one generator $x_n$ in degree $n$ for each $n$ not of the form $2^i-1$ for some $i>0$. It is well known that $\mathbb RP^{2n}$ can be taken to be the polynomial generator of degree $2n$ of the cobordism ring. In \cite{dold}, Dold constructs manifolds that represent the odd dimensional polynomial generators of $MO_*$.

  We are interested in mapping certain H-spaces into $\text{BO}$, where the homology of the associated Thom complexes include the dual Steenrod algebra $\mathcal A_*$ as a tensor product. The key here is that the Adams spectral  sequence collapses for such spaces. One such space is $\Omega ^2 S^3$. Recall that 
  \[H_*(\Omega ^2 S^3)\cong  \mathbb Z/2\mathbb Z[\xi_1,\xi_2, \cdots]\cong \mathcal A_*\]  where $|\xi_i|= 2^i-1$. It is an observation of Mahowald that there exist a map $\Omega ^2 S^3\to \text{BO}$ whose Thom complex is the Eilenberg-Mac Lane spectrum $K(\mathbb Z/2\mathbb Z)$. Let us make this point precise.

  Let $\eta:S^1\to \text{BO}\in \pi_1(\text{BO})$ be the generator. Since $\text{BO}$ is a double loop space, $\text{BO}\simeq \Omega ^2 X$ for some $X$. Let $\gamma$ be the composite
  \[ \Omega^2  S^3=\Omega^2 \Sigma  ^2 S^1\stackrel{\Omega^2 \Sigma  ^2 \eta}\longrightarrow  {\Omega^2 \Sigma  ^2 BO}\simeq \Omega^2 \Sigma  ^2 \Omega^2 X \to \Omega ^2 X\simeq \text{BO}\] 
Mahowald has shown in \cite{mah} that if $M(\gamma)$ is the Thom complex associated with $\gamma$, then 
$M(\gamma)\cong K(\mathbb Z/2\mathbb Z)$. 

Recall that $\mathbb Z\times BSp$ appears as the fourth space in the real $K$-theory spectrum $KO$, that is $\Omega^4 BSp\simeq \text{BO} \times \mathbb Z$, and so $\Omega^3 Sp\simeq \text{BO} \times \mathbb Z$. Here $Sp= \varinjlim Sp(n)$ is the infinite symplectic group. We will show how the homology of Thom complexes of $\Omega^3  Sp(n)$ includes $\mathcal A_*$ as a tensor product, and thus we are lead to considering the maps $\Omega^3 Sp(n)\to \text{BO}$, and in fact this is where the filtration in the above theorem arises.

Let us start by recalling some classical definitions and results. 

Let $A$ be an algebra over a ring $R$. Then elements $x_1, x_2, \cdots x_n, \cdots $ in $A$ are said to form  a \emph{simple system of generators} for $A$ if the monomials $x_1^{\epsilon_i}x_2^{\epsilon_i}\cdots x_m^{\epsilon_i}$ ($\epsilon_i=0$ or 1, $m\geq 0$) form a basis for $A$ over $R$. For example in an exterior algebra $E(x_1, x_2, \cdots)$, the elements $x_1, x_2, \cdots$ form a simple system of generators. In the polynomial ring $R[x]$, the monomials $x, x^2, x^4, \cdots , x^{2^n}, \cdots $ form a simple system of generators. 

For a path fibration $\Omega B\to PB\to B$ over a space $B$, let $\sigma': \tilde H_{n-1}(\Omega B)\to \tilde H_n(B)$ be the homology suspension.
\begin {thma}(A. Borel) Let $B$ be simply connected H-group. Let $a_1, a_2, \cdots$ element of $H_*(\Omega B, R) $ such that for each $n$ only finitely many $a_i$'s lie in $H_*(\Omega B, R) $ and $\sigma'(a_1), \sigma'(a_1)\cdots$ form a simple system of generators for the Pontrjagin ring $H_*( B, R) $. Then $H_*(\Omega B, R) \cong R[a_1,a_2, \cdots]$. 
\end {thma}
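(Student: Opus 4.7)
The plan is to analyze the Serre spectral sequence of the path-loop fibration $\Omega B \to PB \to B$, which has $E^2_{p,q} = H_p(B; H_q(\Omega B; R))$ and converges to $H_*(PB; R) = R$, concentrated in degree zero. The hypothesis that $\sigma'(a_i)$ form a simple system of generators for the Pontryagin ring $H_*(B; R)$ means that, as an $R$-module, $H_*(B; R) \cong \bigotimes_i \bigl(R \oplus R\cdot\sigma'(a_i)\bigr)$; the assumption of finiteness in each degree guarantees that this tensor product (and the one for the conjectured answer) is well defined.

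First I would invoke the standard identification, for the path-loop fibration, of the transgression $\tau$ with the homology suspension $\sigma'$: each class $a_i \in H_*(\Omega B; R)$ is transgressive with $\tau(a_i) = \sigma'(a_i)$. Next I would construct a model spectral sequence by taking, for each index $i$, an ``atomic'' path-loop type spectral sequence with $E^2 = R[a_i]\otimes E(\sigma'(a_i))$ converging to $R$, in which $\tau(a_i)=\sigma'(a_i)$ and whose higher differentials are forced by the Leibniz rule (this is the shape dictated by Kudo's transgression theorem). The external tensor product of these atomic spectral sequences over all $i$ has
\[
E^2 \;=\; \Bigl(\bigotimes_i E(\sigma'(a_i))\Bigr)\otimes R[a_1, a_2, \ldots] \;=\; H_*(B;R)\otimes R[a_1,a_2,\ldots],
\]
and converges to $R$.

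Then I would compare this model with the actual path-loop spectral sequence via a map of spectral sequences which on $E^2$ is the identity on the $H_*(B;R)$ factor and sends each polynomial generator to the corresponding $a_i$. The map is compatible with $\tau$ by construction, and on $E^\infty$ it is the identity $R \to R$. A standard Zeeman-type comparison theorem for first-quadrant spectral sequences then forces the map on the fiber factor of $E^2$ to be an isomorphism, yielding $H_*(\Omega B; R) \cong R[a_1, a_2, \ldots]$.

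The main obstacle is controlling the differentials in the actual spectral sequence sufficiently to run the comparison: one must know that nothing in the fiber beyond the algebra generated by the $a_i$ survives, and that the transgression really does carry $a_i$ to $\sigma'(a_i)$ on the nose rather than modulo decomposables (the latter being handled by the fact that homology suspensions always land in the primitives of the H-space $B$). A clean alternative, once the polynomial subalgebra $R[a_1,a_2,\ldots]\hookrightarrow H_*(\Omega B;R)$ is in place, is a Poincar\'e-series argument: the model spectral sequence shows $P(B;t)\cdot \prod_i (1-t^{|a_i|})^{-1}$ is the smallest series compatible with convergence to $R$ and with the given transgressions, which combined with the injection forces equality.
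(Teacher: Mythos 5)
The paper offers no proof of its own here---it simply cites Switzer, Theorem 15.60---and the argument you sketch (a tensor product of elementary spectral sequences modelling the path--loop fibration, compared against the actual Leray--Serre spectral sequence via Zeeman's comparison theorem to force the fibre term to be polynomial) is exactly the proof given in that reference. Your outline is correct, up to two cosmetic points: in homology the transgression runs from base to fibre, so the compatibility you want is $\tau(\sigma'(a_i))=a_i$ rather than $\tau(a_i)=\sigma'(a_i)$, and the differentials in each elementary model $E(\sigma'(a_i))\otimes R[a_i]$ are already forced by multiplicativity and convergence to $R$, so Kudo's transgression theorem is not needed.
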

\begin{proof} See \cite{switzer}, theorem 15.60.
\end {proof}

Using the fibration $Sp(n-1)\to Sp(n)\to S^{4n-1}$, one can easily see that:
\begin{lemma}  The homology of $Sp(n)$ is an exterior algebra on generators in degrees of the form $4i-1$ for $1\leq i \leq n$, that is \[H_*(Sp(n))=E(a_{4i-1}, 1\leq i \leq n).\]

\end{lemma}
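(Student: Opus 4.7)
The plan is to induct on $n$ using the Serre spectral sequence of the indicated fibration, working with mod 2 coefficients throughout (as per the paper's convention).

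For the base case $n=1$, we have $Sp(1)\cong S^3$ (unit quaternions), so $H_*(Sp(1))=E(a_3)$.

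For the inductive step, assume $H_*(Sp(n-1))\cong E(a_3,a_7,\ldots,a_{4n-5})$. The mod 2 Serre spectral sequence of $Sp(n-1)\to Sp(n)\to S^{4n-1}$ has $E_2$-page concentrated in columns $p=0$ and $p=4n-1$, so
\[
E_2^{p,q}\cong H_p(S^{4n-1})\otimes H_q(Sp(n-1)),
\]
and the only potentially nontrivial differential is the transgression $d_{4n-1}$ on the fundamental class $\iota\in H_{4n-1}(S^{4n-1})$, landing in $H_{4n-2}(Sp(n-1))$.

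The key step is showing this transgression vanishes. Since the fibration is one of topological groups and group homomorphisms, the spectral sequence is one of Hopf algebras, and the transgressed class $d_{4n-1}(\iota)$ must be primitive in $H_*(Sp(n-1))$. By the inductive hypothesis, the Hopf algebra $E(a_3,\ldots,a_{4n-5})$ is primitively generated by the $a_{4i-1}$, so its primitive elements lie entirely in degrees $\equiv 3\pmod 4$. Since $4n-2\equiv 2\pmod 4$, there is no nonzero primitive in that degree, forcing $d_{4n-1}=0$. The spectral sequence therefore collapses at $E_2$, giving an additive isomorphism $H_*(Sp(n))\cong H_*(Sp(n-1))\otimes E(a_{4n-1})$, where $a_{4n-1}$ is a class mapping to $\iota$.

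It remains to pin down the ring structure, i.e.\ that $a_{4n-1}^2=0$. This is the main subtlety, since a priori in characteristic $2$ odd-degree primitives of a Hopf algebra need not square to zero. I would handle this by running the dual argument on cohomology in parallel: the same spectral-sequence collapse in $H^*(Sp(n);\mathbb{F}_2)$ combined with the transgression being a derivation forces the cohomology to be an exterior algebra on generators in degrees $4i-1$, and dualizing gives the claimed exterior structure on homology. Alternatively, one can appeal to the standard computation of $H^*(Sp(n);\mathbb{F}_2)$ as exterior. Thus the primary obstacle is the vanishing of $d_{4n-1}$, which is resolved by the primitivity/degree argument above; the passage from additive to multiplicative structure is a routine cohomological bookkeeping.
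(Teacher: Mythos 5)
Your overall strategy (induction over the fibration $Sp(n-1)\to Sp(n)\to S^{4n-1}$) is the same one the paper relies on: the paper gives no argument of its own but cites Hatcher, Corollary 4D.3, whose proof is exactly this induction. Where you differ is in the key collapse step. The standard route runs the \emph{cohomology} spectral sequence and observes that the only possible differential $d_{4n-1}\colon E^{0,q}\to E^{4n-1,\,q-4n+2}$ vanishes on the multiplicative generators of the fiber for pure degree reasons, since $q\le 4n-5<4n-2$ makes the target zero; multiplicativity then forces collapse, and the ring statement comes for free because the new generator is $p^*(\iota)$ with $\iota^2=0$ already in $H^*(S^{4n-1})$, so your worry about odd-degree squares in characteristic $2$ never arises. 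Your primitivity argument reaches the same conclusion by a heavier route, and as written it has two soft spots that need repair.

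First, the claim that the spectral sequence is ``one of Hopf algebras'' because the fibration consists of ``topological groups and group homomorphisms'' is not correct: the base $S^{4n-1}=Sp(n)/Sp(n-1)$ is a homogeneous space, not a group, and the projection is not a homomorphism for $n\ge 3$. What you actually need is that the homology Serre spectral sequence with field coefficients is a spectral sequence of coalgebras, so $d_{4n-1}$ is a coderivation and sends the primitive fundamental class of the sphere to a primitive of $H_{4n-2}(Sp(n-1))$. Second, ``primitively generated'' is not part of your stated inductive hypothesis and is not automatic; the clean fix is Milnor--Moore: in a connected commutative Hopf algebra over $\mathbb F_2$ in which all squares of positive-degree elements vanish (true in an exterior algebra), the primitives inject into the indecomposables, and every element of $E(a_3,\dots,a_{4n-5})$ in degree $4n-2\equiv 2\pmod 4$ is a sum of products of at least two generators, hence decomposable. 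With these two repairs your argument is correct, but the elementary degree count on the fiber generators makes the Hopf-algebra input unnecessary.
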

For details see  \cite{hatcher}, corollary 4D.3.

Borel's theorem then implies that the homology of the loop space $\Omega Sp(n)$ is a polynomial algebra on generators in degrees $4i-2$: 
 \[H_*(\Omega Sp(n))=\mathbb Z/2\mathbb Z[b_{4i-2}, 1\leq i \leq n]\]
 
 For the homology of the double loop space $\Omega ^2Sp(n)$, we choose a simple system of generators for the above polynomial algebra (i.e. $b_{4i-2}^{2^j}$ for 1$\leq i \leq n$ and $j\geq 0$) and $H_*(\Omega^2 Sp(n))$ will then be a polynomial algebra on generators 
 
 \[\{\sigma'(b_{4i-1}), \sigma'(b_{4i-1}^2),  \sigma'(b_{4i-1}^4),\cdots, 1\leq i \leq n\}\]
 
We are going to consider the Thom complexes of the maps $\Omega^3 (Sp(n))\to \text{BO}$. For each $n$ we obtain an infinite family of polynomial generators given by such complexes. 
 We start with $n=1$.
 
 \vspace{.2in}
\noindent\textbf{\boldmath{Sp(1)}}. It is easily seen that $Sp(1)=S^3$, and so we are looking at the map $\Omega^3 S^3\to \text{BO}$. The homology of $\Omega S^3$ is a polynomial algebra on one generator in degree 2 and $\Omega S^3$ has a CW structure 
\[\Omega S^3=J(S^2)\simeq S^2\cup  e^4\cup e^6\cup \cdots \]
 with all the cells attached nontrivially. Here $J(S^2)$ denotes the James reduced product on $S^2$. Now we consider pieces of this product starting with $\Omega^2 (S^2\cup e^4\cup e^6)$. Let $\doteq$ denote homological equivalence. Since the 6-cell $e^6$ is the product of the cells $e^2$ and $e^4$, we have  \[\Omega ^2(S^2\cup e^4\cup e^6)\doteq \Omega ^2 S^2 \times \Omega^2 S^4\] 
 
 The homology of $\Omega ^2 S^2$ is $\mathbb Z$ in degree zero and is equal to the homology of $\Omega ^2 S^3$ in higher dimensions (this can easily be seen from the Hopf fibration $S^1\to S^3\to S^2$). In fact, $\Omega ^2 S^3$ is homotopy equivalent to the connected component of $\Omega ^2 S^2$ containing the base point. It is worth noting that the map $\gamma$ introduced above is the composite map
 \[\Omega ^2 S^3\hookrightarrow \Omega ^2 S^2\to \Omega^3 S^3\to \text{BO}\]
 and the $0^{\text{th}}$ space in our filtration $F_{(1,0,0)}$ is the image of this map in $\text{BO}$.
 
 For $\Omega ^2 S^4$, we can write 
 \[\Omega ^2 S^4\simeq \Omega \Omega \Sigma S^3\simeq \Omega J(S^3)\] We are now going to consider the pieces $J_{2^i-1}(S^3)$, for various $i$, starting with $J_0(S^3)=S^3$. Let $\gamma_2$ be the map $\Omega ^2 S^2\times \Omega J_0(S^3)\to \text{BO}$, and let $M(\gamma_2)$ be the associated Thom complex. The homology of this complex is given by 
 \[H_*(M(\gamma_2))\cong \mathcal A_*\otimes H_*(\Omega S^3)\] the best possible outcome, for feeding this into the Adams spectral sequence we get 
 \[E_2^{s,t}=\text{Ext}_{\mathcal A_*}^{s,t}(\mathbb Z/2\mathbb Z, \mathcal A_*\otimes H_*(\Omega S^3))=\text{Ext}_{\mathbb Z/2\mathbb Z}^{s,t}(\mathbb Z/2\mathbb Z, H_*(\Omega S^3))\]\[=
  \left\{
	\begin{array}{ll}
		\text{Hom}_{\mathbb Z/2\mathbb Z}^t(\mathbb Z/2\mathbb Z, H_*(\Omega S^3)) &  s = 0 \\
		0 &  s > 0
	\end{array}
\right. = 
 \left\{
	\begin{array}{ll}
		H_t(\Omega S^3)) & s = 0 \\
		0 &  s > 0
	\end{array}
\right.
\]
Hence 
\[ \pi_*(M(\gamma_2))\cong H_*(\Omega S^3))=\mathbb Z/2\mathbb Z[x_2]\] 
Thus we obtain our first polynomial generator $x_2$ in degree 2. 

Next we consider the Thom complex $M(\gamma_5)$ of the map
 \[\gamma_5: \Omega ^2 S^2\times \Omega J_1(S^3)\to \text{BO}\]
 The argument above repeats to give that 
\[\pi_*(M(\gamma_5))\cong H_*(\Omega J_1(S^3))\cong \mathbb Z/2\mathbb Z[x_2, x_5] \] giving rise to a new generator in degree $2\cdot3-1=5$ denoted by $x_5$. 

We can repeat this process by forming the Thom complexes of the maps  $$\gamma_{2^i-1}: \Omega ^2 S^2\times \Omega J_{2^i-1}(S^3)\to \text{BO}$$ giving rise to a family of generators in degrees $3\cdot2^i-1$, for all nonnegative $i$. In addition, we take $F_{(1,1,i)}$ to be the image of $\gamma_{2^i-1}$ in $\text{BO}$.

 We now construct manifold generators representing this family. Let $X$ be any space. Following \cite{RLB}, let \emph{cup ``m" construction} $P(m, X)$ on $X$ be the quotient of $S^m\times X\times X$ by the relations $(u, x, y)\sim (-u, y, x)$. If $X$ is an $n$-manifold, then $P(m, X)$ is an $(m+2n)$-manifold. We use this construction for $m=1, 2$ only. Proposition 4.1 of \cite{RLB} implies that if a manifold $M$ represent an indecomposable cobordism class, then $P(1, M)$ is also indecomposable. In addition, if $M$ is of even dimension, then $P(2, M)$ is also indecomposable.

We construct our manifolds recursively. Let $M_2$ be the real projective space $\mathbb RP^2$, representing $x_2$. By the paragraph above, $M_5=P(1, M_2)$ is a  manifold of dimension 5 which represents an indecomposable cobordism class, and can be taken to be $x_5$. If $M_{3\cdot 2^i-1}$ is constructed, we can set $M_{3\cdot 2^{i+1}-1}=P(1, M_{3\cdot 2^i-1})$ to get a manifold of dimension $2(3\cdot 2^i-1)+1=3\cdot 2^{i+1}-1$, representing $x_{3\cdot 2^{i+1}-1}$.

The next step is to consider the next bit of the product, that is, we consider $\Omega ^2J_{2^3-1}(S^2)$, and homologicaly 
 \[\Omega ^2J_7(S^2)\doteq \Omega ^2 S^2 \times \Omega^2 S^4\times \Omega^2 S^8\] 
Similar to the above case, $\Omega^2 S^8=\Omega\Omega \Sigma S^7=\Omega J(S^7)$, and we can consider the maps $\Omega ^2 S^2 \times \Omega^2 S^4\times \Omega J_{2^i-1}( S^7)\to\text{BO}$ for various $i$. The resulting Thom complexes will give new polynomial generators in degrees $7\cdot 2^i-1$, in addition to the generators arising from $\Omega ^2 S^2 \times \Omega^2 S^4$. 

We use cup 2 and then cup 1 construction to construct manifolds representing this family of generators. The lowest degree generator is in degree 6, and so we can start with $M_6=\mathbb P(2, M_2)$, and if $M_{7\cdot 2^i-1}$ is constructed, we set $M_{7\cdot2^{i+1}-1}=P(1, M_{7\cdot 2^i-1})$. The dimension of $M_{7\cdot 2^{i+1}-1}$ is then $2(7~\cdot2^i-1)+1=7\cdot 2^{i+1}-1$, and can be taken to represent $x_{7\cdot 2^{i+1}-1}$.

It should now be clear that the piece $\Omega ^2J_{2^j-1}(S^2)$, for a fixed $j\geq 1$, will give infinitely many generators in degrees $(2^{j+1}-1)2^i-1$ for all nonnegative $i$. The lowest degree generator will be in dimension $2^{j+1}-2$, which can be constructed from the generator in degree $2^j-2$ by cup 2 construction starting with $\mathbb RP^2$. The higher degree generators in the family are then given using cup 1 construction.  This shows that for $n=1$ all the generators can be built from $\mathbb RP^2$ recursively.

\vspace{.2in}

\noindent\textbf{\boldmath{Sp(2)}}.  The homology ring $H_*( Sp(2))=E(a_3,a_7)$ has a generator in degree 7 and $\frac{Sp(2)}{Sp(1)}=S^7$. Again homologicaly we have 
\[\Omega S^7\doteq \prod_{j\geq 0} S^{6\cdot2^j}\]
and so 
\[\Omega^3 S^7\doteq \Omega^2 \prod S^{6\cdot2^j}\doteq \Omega \prod \Omega \Sigma S^{6\cdot2^j-1}\doteq \Omega \prod J(S^{6\cdot2^j-1})\]
As before, starting with $S^5$, we consider the Thom complexes of the maps 
\[\Omega^3 S^3\times \Omega J_{2^i-1} S^5\to \text{BO}\]
The homology of such complexes will include the dual Steenrod algebra $\mathcal A_*$ as a tensor product, resulting from $\Omega^3 S^3$. Thus in addition to the generators obtained from ${Sp}(1)$, we get polynomial generators in degrees $5\cdot 2^i-1$, $i\geq 0$. This is the first family of generators arising from $\text{Sp}(2)$, with the first one in degree 4. The representing manifolds can then be constructed recursively, starting from $\mathbb RP^4$ and using cup 1 construction, analogous to the case of ${Sp}(1)$.

Similarly, $S^{11}$ gives generators in degrees $11\cdot 2^i-1$, with the first one in degree 10.  For this family, we start with $M_{10}=\mathbb P(2, \mathbb RP^4)$ and apply cup 1 construction to get the higher degree generators. Similarly for $S^{23}, S^{47},\cdots$. This shows that  $\text{Sp}(2)$ gives rise to generators in degrees $(6\cdot 2^j-1)2^i-1$ for all nonnegative $i$ and $j$, all of which can be constructed from $\mathbb RP^4$.
\vspace{.2in}

\noindent\textbf{\boldmath{Sp(n)}}. Quite generally, assume that polynomial generators are constructed for the map $\Omega ^3\text{Sp}(n-1)\to \text {BO}$. $Sp(n)$ will then give us a new exterior generator in degree $4n-1$, and $\frac{Sp(n)}{Sp(n-1)}=S^{4n-1}$.  Homologicaly we have 
\[\Omega S^{4n-1}\doteq \prod_{j\geq 0} S^{(4n-2)2^j}\]
and so 
\[\Omega^3 S^{4n-1}\doteq \Omega^2 \prod S^{(4n-2)2^j}\doteq \Omega \prod \Omega \Sigma S^{(4n-2)2^j-1}\doteq \Omega \prod J(S^{(4n-2)2^j-1})\]
This shows that the various pieces $J_{2^i-1}(S^{(4n-2)2^j-1})$ will give rise to generators in degrees $((4n-2)2^j-1)2^i-1$ for $n\geq 1$ and all nonnegative $i$ and $j$ (when $n=1$,  $j\geq 1$).  $F_{(n, j, i)}$ is then the image of $Sp(n-1)\times \Omega J_{2^i-1}(S^{(4n-2)2^j-1})$ under the map 
\[Sp(n-1)\times \Omega J_{2^i-1}(S^{(4n-2)2^j-1})\to \Omega ^3\text{Sp}(n)\to \text {BO}\]

Thus for fixed $n$ and $j$, we obtain an infinite family of generators with the first one in degree $(4n-2)2^j-2$ for $i=0$.  Manifolds representing such family are constructed using the cup construction, in the following manner. For a fixed $n$, we have infinite family of generators corresponding to each $j=0, 1, 2, \cdots$. When $j=0$, the family of generators obtained by varying $i$ can be constructed using cup 1 construction recursively starting with $\mathbb RP^{4(n-1)}$. For infinite family corresponding to $j=1$, we start with cup 2 construction on $\mathbb RP^{4(n-1)}$, and then cup 1 for the rest of the generators in the family. The lowest degree generator in the family corresponding to $j=2$ is obtained from cup 2 construction on lowest degree generator in the family corresponding to $j=1$, and then cup 1 for the rest of the family. And so on. The only exception is when $n=1$, in which case we start with $\mathbb RP^2$, as demonstrated above for $Sp(1)$.

Now it's easily seen that the numbers $((4n-2)2^j-1)2^i-1$ are not of the form $2^k-1$ for any $k$, and any integer not of the form $2^k-1$ can be written uniquely in the form $((4n-2)2^j-1)2^i-1$. Thus we obtain all the polynomial generators of $MO_*$, all of which can be constructed using cup 1 and cup 2 constructions from $\mathbb RP^2$ and $\mathbb RP^{4n}$ for $n\geq1$.


\end{document}